\newtheorem{assumption}{Assumption}
\newtheorem{corollary}{Corollary}
\newtheorem{definition}{Definition}
\newtheorem{proposition}{Proposition}
\newtheorem{remark}{Remark}
\newtheorem{theorem}{Theorem}
\crefname{equation}{}{}
\crefname{assumption}{Assumption}{Assumptions}
\crefname{corollary}{Corollary}{Corollaries}
\crefname{definition}{Definition}{Definitions}
\crefname{proposition}{Proposition}{Propositions}
\crefname{remark}{Remark}{Remarks}
\crefname{theorem}{Theorem}{Theorems}
\title{$\quad$Extremum Seeking Control for a Class of $\quad$ Mechanical Systems\footnote{This research was supported by the German Research Foundation DFG, project number DA~767/13-1.}}
\author{Raik Suttner\footnote{University of Wuerzburg, Wuerzburg, Germany (email: raik.suttner@ mathematik.uni-wuerzburg.de)}}
\date{}
\begin{document}
\maketitle

$\vphantom{m}$ \vspace{-2cm} $\vphantom{m}$

\begin{abstract}
We present a novel extremum seeking method for affine connection mechanical control systems. The proposed control law involves periodic perturbation signals with sufficiently large amplitudes and frequencies. A suitable averaging analysis reveals that the solutions of the closed-loop system converge locally uniformly to the solutions of an averaged system in the large-amplitude high-frequency limit. This in turn leads to the effect that stability properties of the averaged system carry over to the approximating closed-loop system. Descent directions of the objective function are given by symmetric products of vector fields in the averaged system. Under suitable assumptions, we prove that minimum points of the objective function are asymptotically stable for the averaged system and therefore practically asymptotically stable for the closed-loop system. We illustrate our results by examples and numerical simulations.
\end{abstract}

%---------------------------------------------------------------------------------------------------------------------------------------------------------
%---------------------------------------------------------------------------------------------------------------------------------------------------------
%                                                                     Section 1
%---------------------------------------------------------------------------------------------------------------------------------------------------------
%---------------------------------------------------------------------------------------------------------------------------------------------------------

\section{Introduction}
In this paper, we investigate the problem of asymptotically stabilizing a mechanical system about configurations in which a real-valued objective (or output) function attains an extreme value. The only information about the objective function is given by real-time measurements of its values in the current configurations of the system. On the other hand, measurements of the system state are not assumed to be available. Research on this type of optimization problem is motivated by various applications, such as spacecraft attitude optimization~\cite{Walsh2017}, optimal alignment of solar panels~\cite{Solis2019}, and environmental source seeking~\cite{Matveev2014}.

Classical extremum seeking schemes, like those in~\cite{Krstic2000,Tan2006}, rely on the assumption that the underlying control system is asymptotically stable for arbitrary constants inputs. This assumption is certainly valid for some extremum seeking problems (see e.g.~\cite{AriyurBook,ZhangBook}), but, in general, one cannot expect that a control system displays a stable behavior in the presence of constant forces or torques. It is therefore desirable to develop methods that can be applied to potentially unstable control systems. To the best of our knowledge, there is no general extremum seeking method so far that can be successfully applied to a large class of open-loop unstable mechanical systems. The intention of the present paper is to take a first step into this direction.

In recent years, differential geometric concepts have become increasingly popular for the design of extremum seeking control laws~\cite{Duerr2013,Scheinker20132,Labar2019,Michalowsky2020}. These methods employ periodic dither signals with sufficiently large amplitudes and frequencies to induce an approximation of Lie brackets. A suitable design of the feedback loop leads to the effect that the Lie brackets point into descent directions of the objective function. Moreover, the employed large-amplitude high-frequency perturbation signals have the ability to overpower potentially unstable dynamics. This approach works very well as long as the control system is a first-order kinematic system. In general, however, one cannot expect that a direct application of these methods to higher-order control systems, like a second-order mechanical system, will lead to an extremum seeking closed-loop system. Extensions of the first-order kinematic Lie bracket approach from~\cite{Duerr2013} to a class of higher-order systems can be found in~\cite{Michalowsky2014,Michalowsky2015}. However, those extensions are restricted to integrator chains and do not cover the class of mechanical system that we consider in the present paper. Moreover, the approach in~\cite{Michalowsky2014,Michalowsky2015} leads to oscillatory velocities with large amplitudes, which is undesired in real-world applications. Here, we propose a method that guarantees uniformly bounded velocities in the closed-loop system.

Most of the known extremum seeking methods for potentially unstable second-order control systems are restricted to the double-integrator point model. This very simple mechanical control system frequently appears in the literature on source seeking; e.g., in~\cite{Zhang20072,Stankovic2009,Scheinker20183,Fu2021}. For second-order systems with complex dynamics, the problem of extremum seeking becomes significantly more difficult.

As indicated earlier, the Lie bracket approach for first-order kinematic systems in~\cite{Duerr2013} only allows limited extensions to second-order mechanical system. In the present paper we propose a different strategy, which is applicable to a much larger class of mechanical systems. Instead of approximating Lie brackets of pairs of vector field, we approximate so-called symmetric products of vector fields. Symmetric products occur naturally in the analysis of mechanical systems~\cite{Crouch1981,Lewis1998,Bullo2002,BulloBook}. One can show that symmetric products originate from iterated Lie brackets of vector fields on the tangent bundle of the configuration manifold~\cite{Lewis19972,BulloBook}. Here, we use approximations of symmetric products to obtain information about descent directions of the objective function. Applications of this strategy to double-integrator points and unicycles can be found in~\cite{Suttner20192,Suttner20202,Suttner20193}. In this paper, we investigate this new symmetric product approach to extremum seeking control for a more general class of mechanical systems and indicate further applications.

The paper is organized as follows. We summarize some of the frequently used notation in \Cref{sec:02}. In \Cref{sec:03}, we introduce suitable notions of (practical) asymptotic stability for a parameter-dependent system and its associated averaged system. A general extremum seeking problem for a class of mechanical systems is formulated in \Cref{sec:04}. This section also contains the proposed control law and the general stability results. We apply the method to the problems of source seeking and attitude control in \Cref{sec:05}. The paper ends with some conclusions in \Cref{sec:06}.

%---------------------------------------------------------------------------------------------------------------------------------------------------------
%---------------------------------------------------------------------------------------------------------------------------------------------------------
%                                                                     Section 2
%---------------------------------------------------------------------------------------------------------------------------------------------------------
%---------------------------------------------------------------------------------------------------------------------------------------------------------

\section{Notation}\label{sec:02}
We assume that the reader is familiar with the concepts of affine differential geometry. In this paper, we use basically the same notation and terminology as in~\cite{BulloBook}. Since we restrict our considerations to submanifolds of Euclidean space, we can use the following natural identifications of tangent spaces and tangent bundles. Let $Q$ be a submanifold of Euclidean space, say $\mathbb{R}^N$. For every $q\in{Q}$, the \emph{tangent space} to $Q$ at $q$, denoted by $T_qQ$, is treated as a subspace of $\mathbb{R}^N$. The \emph{tangent bundle} of $Q$, denoted by $TQ$, is the disjoint union of all tangent spaces to $Q$; i.e., $TQ$ is the subset of $\mathbb{R}^N\times\mathbb{R}^N$ consisting of all pairs $(q,v)$ with $q\in{Q}$ and $v\in{T_qQ}$. In particular, if $q$ is a differentiable curve in $Q$, then $(q,\dot{q})$ is a curve in $TQ$. A \emph{vector field} $Y$ on $Q$ is considered as a map $Q\to\mathbb{R}^N$ such that $Y(q)\in{T_qQ}$ for every $q\in{Q}$. A \emph{bundle map} $R$ over $Q$ is treated as a map that assigns to each $q\in{Q}$ a linear map $R(q)\colon{T_qQ}\to{T_qQ}$. A \emph{Riemannian metric} $\langle\!\langle\cdot,\cdot\rangle\!\rangle$ on $Q$ is a map that assigns to each $q\in{Q}$ an inner product $\langle\!\langle\cdot,\cdot\rangle\!\rangle_q$ on $T_qQ$ in a smooth way. The induced norm on $T_qQ$ is then denoted by $\|\cdot\|_q$. The word ``smooth'' always means ``of class $C^\infty$.'' For every smooth real-valued function $\psi$ on $Q$ and every vector field $X$ on $Q$, the \emph{Lie derivative} of $\psi$ along $X$ is denoted by $X\psi$. An \emph{affine connection on $Q$} is a map $\nabla$ that assigns to each pair of smooth vector fields $X,Y$ on $Q$ another smooth vector field on $Q$, denoted by $\nabla_{X}Y$, such that $(X,Y)\mapsto\nabla_{X}Y$ is $\mathbb{R}$-linear and such that $\nabla_{\psi{X}}Y=\psi\nabla_{X}Y$ and $\nabla_X(\psi{Y})=\psi\nabla_X{Y}+(X\psi)Y$ for every smooth real-valued function $\psi$ on $Q$. Let $q$ be a smooth curve in $Q$. Note that the second-order time derivative $\ddot{q}(t)\in\mathbb{R}^N$ is, in general, not an element of $T_{q(t)}Q$. However, one can use the defining properties of an affine connection $\nabla$ on $Q$ to introduce a suitable replacement $\nabla_{\dot{q}(t)}\dot{q}(t)$ for $\ddot{q}(t)$ as an element of $T_{q(t)}Q$ (see~\cite{BulloBook}). The resulting map, denoted by $\nabla_{\dot{q}}\dot{q}$, is called the \emph{geometric acceleration} of $q$.

%---------------------------------------------------------------------------------------------------------------------------------------------------------
%---------------------------------------------------------------------------------------------------------------------------------------------------------
%                                                                     Section 3
%---------------------------------------------------------------------------------------------------------------------------------------------------------
%---------------------------------------------------------------------------------------------------------------------------------------------------------

\section{Practical stability}\label{sec:03}
Let $M$ be an embedded submanifold of Euclidean space with Euclidean norm $|\cdot|$. If $K$ is a nonempty and compact subset of $M$, then, for every $x\in{M}$, we use the notation $|x|_K$ for the Euclidean distance of $x$ to $K$. For every $\omega>0$, let $f^\omega$ be a time-dependent vector field on $M$ such that, for every $t_0\in\mathbb{R}$ and every $x_0\in{M}$, the differential equation%
\begin{equation}\label{eq:01}
\dot{x} \ = \ f^\omega(t,x)
\end{equation}%
with initial condition $x(t_0)=x_0$ has a unique maximal solution. The extremum seeking closed-loop system in \Cref{sec:04} will be of the form~\cref{eq:01}, where $\omega$ is a parameter to scale the amplitudes and frequencies of periodic dither signals. For every $\omega>0$ and every $t\in\mathbb{R}$, let $\phi^\omega_{t}\colon{M}\to{M}$ be a bijective map. We use this map to carry out the \emph{change of variables}%
\begin{equation}\label{eq:02}
\tilde{x} \ = \ \phi^{\omega}_t(x).
\end{equation}%
In \Cref{sec:04}, such a change of variables will be applied to the velocities of the closed-loop system.
\begin{definition}[\cite{Duerr2013,Duerr2014}]\label{def:01}
Let $K$ be a nonempty and compact subset of $M$. We say that $K$ is \emph{practically uniformly stable for~\cref{eq:01} in the variables~\cref{eq:02}} if, for every $\varepsilon>0$, there exist $\omega_0,\delta>0$ such that, for every $\omega\geq\omega_0$, every $t_0\in\mathbb{R}$, and every $\tilde{x}_0\in{M}$ with $|\tilde{x}_0|_K\leq\delta$, the maximal solution $x$ of~\cref{eq:01} with initial condition $x(t_0)=(\phi^{\omega}_{t_0})^{-1}(\tilde{x}_0)$ satisfies $|\phi^{\omega}_t(x(t))|_K\leq\varepsilon$ for every $t\geq{t_0}$.
\end{definition}
\begin{definition}[\cite{Duerr2013,Duerr2014}]\label{def:02}
Let $K$ be a nonempty and compact subset of $M$ and let $S$ be a neighborhood of $K$ in $M$. We say that $K$ is \emph{$S$-practically uniformly attractive for~\cref{eq:01} in the variables~\cref{eq:02}} if, for all $r,\varepsilon>0$, there exist $\omega_0,\Delta,R>0$ such that, for every $\omega\geq\omega_0$, every $t_0\in\mathbb{R}$, and every $\tilde{x}_0\in{S}$ with $|\tilde{x}_0|_K\leq{r}$, the maximal solution $x$ of~\cref{eq:01} with initial condition $x(t_0)=(\phi^{\omega}_{t_0})^{-1}(\tilde{x}_0)$ satisfies $|\phi^{\omega}_t(x(t))|_K\leq{R}$ for every $t\geq{t_0}$ and $|\phi^{\omega}_t(x(t))|_K\leq\varepsilon$ for every $t\geq{t_0+\Delta}$.
\end{definition}
\begin{definition}[\cite{Duerr2013,Duerr2014}]\label{def:03}
Let $K$ be a nonempty and compact subset of $M$ and let $S$ be a neighborhood of $K$ in $M$. We say that $K$ is \emph{$S$-practically uniformly asymptotically stable for~\cref{eq:01} in the variables~\cref{eq:02}} if $K$ is practically uniformly stable for~\cref{eq:01} in the variables~\cref{eq:02} and if $K$ is $S$-practically uniformly attractive for~\cref{eq:01} in the variables~\cref{eq:02}.
\end{definition}
\begin{remark}\label{rmk:01}
If $S=M$, then we replace the prefix ``$S$-'' in \Cref{def:02,def:03} by ``semi-globally''. If there exists a time-dependent vector field $\bar{f}$ on $M$ such that $f^\omega=\bar{f}$ for every $\omega>0$, then we omit the word ``practically'' in \Cref{def:01,def:02,def:03}. If $\phi^{\omega}_t(x)=x$ for every $\omega>0$, every $t\in\mathbb{R}$, and every $x\in{M}$, then we omit the phrase ``in the variables~\cref{eq:02}'' in \Cref{def:01,def:02,def:03}. The word ``uniformly'' in \Cref{def:01,def:02,def:03} indicates that the properties therein are uniform with respect to the time parameter.
\end{remark}
Let $\bar{f}$ be a time-dependent vector field on $M$ such that, for every $t_0\in\mathbb{R}$ and every $\bar{x}_0\in{M}$, the differential equation%
\begin{equation}\label{eq:03}
\dot{\bar{x}} \ = \ \bar{f}(t,\bar{x})
\end{equation}%
with initial condition $\bar{x}(t_0)=\bar{x}_0$ has a unique maximal solution. In \Cref{sec:04}, the averaged system of the closed-loop system will be of the form~\cref{eq:03}.
\begin{definition}[\cite{Duerr2013,Duerr2014}]\label{def:04}
We say that \emph{the solutions of~\cref{eq:01} in the variables~\cref{eq:02} approximate the solutions of~\cref{eq:03}} if, for every compact subset $C$ of $M$ and all $\nu,\Delta>0$, there exists $\omega_0>0$ such that, for every $t_0\in\mathbb{R}$ and every $\bar{x}_0\in{C}$, the following implication holds: If the maximal solution $\bar{x}$ of~\cref{eq:03} with initial condition $\bar{x}(t_0)=\bar{x}_0$ satisfies $\bar{x}(t)\in{C}$ for every $t\in[t_0,t_0+\Delta]$, then, for every $\omega\geq\omega_0$, the maximal solution $x$ of~\cref{eq:01} with initial condition $x(t_0)=(\phi^{\omega}_{t_0})^{-1}(\bar{x}_0)$ satisfies $|\phi^{\omega}_t(x(t))-\bar{x}(t)|\leq\nu$ for every $t\in[t_0,t_0+\Delta]$.
\end{definition}
In the situation of \Cref{def:04}, stability properties of~\cref{eq:03} carry over to the approximating system~\cref{eq:01} as follows.
\begin{proposition}[\cite{Duerr2013,Duerr2014}]\label{prop:01}
{\color{red}Suppose that the embedded submanifold $M$ is a topologically closed subset of the ambient Euclidean space.} Let $K$ be a nonempty and compact subset of $M$ and let $S$ be a neighborhood of $K$ in $M$. Assume that $K$ is $S$-uniformly asymptotically stable for~\cref{eq:03} and that the solutions of~\cref{eq:01} in the variables~\cref{eq:02} approximate the solutions of~\cref{eq:03}. Then $K$ is $S$-practically uniformly asymptotically stable for~\cref{eq:01} in the variables~\cref{eq:02}.
\end{proposition}%

%---------------------------------------------------------------------------------------------------------------------------------------------------------
%---------------------------------------------------------------------------------------------------------------------------------------------------------
%                                                                     Section 4
%---------------------------------------------------------------------------------------------------------------------------------------------------------
%---------------------------------------------------------------------------------------------------------------------------------------------------------

\section{Extremum seeking control}\label{sec:04}
Throughout this section, we suppose that
\begin{itemize}
	\item $Q$ is a {\color{red}properly} embedded submanifold of Euclidean space,
	%\item $\mathbb{G}$ is a Riemannian metric on $Q$
	%\item $\nabla$ is the Levi-Civita connection associated with $(Q,\mathbb{G})$,
	\item $\nabla$ is an affine connection on $Q$,
	\item $Y_0,Y_1,\ldots,Y_n$ are smooth vector fields on $Q$,
	\item $R$ is a smooth bundle map over $Q$,
	\item $\psi$ is a smooth real-valued function on $Q$
\end{itemize}
(see~\cite{BulloBook} and \Cref{sec:02} for definitions).

%---------------------------------------------------------------------------------------------------------------------------------------------------------
%                                                                     Section 4.1
%---------------------------------------------------------------------------------------------------------------------------------------------------------

\subsection{Problem statement and control law}\label{sec:04.1}
We consider a multiple-input, single-output mechanical control system of the form%
\begin{align}
\nabla_{\dot{q}}\dot{q} & \ = \ Y_0(q) - R(q)\dot{q} + \sum_{i=1}^n\mathrm{u}^i\,Y_i(q), \label{eq:04} \allowdisplaybreaks \\
\mathrm{y} & \ = \ \psi(q) \label{eq:05}
\end{align}%
on the \emph{configuration manifold} $Q$, where $\mathrm{u}$ is an $n$-component vector of real-valued \emph{input channels} $\mathrm{u}^1,\ldots,\mathrm{u}^n$ and $\mathrm{y}$ is a real-valued \emph{output channel}. The symbol $\nabla_{\dot{q}}\dot{q}$ on the left-hand side of~\cref{eq:04} denotes the \emph{geometric acceleration} of $q$ with respect to $\nabla$ (see~\cite{BulloBook} and \Cref{sec:02}). On the right-hand side of~\cref{eq:04}, the vector field $Y_0$ represents a \emph{drift}, the bundle map $-R$ describes \emph{dissipation}, and $Y_1,\ldots,Y_n$ are referred to as the \emph{control vector fields}. The goal is to asymptotically stabilize~\cref{eq:04} about configurations where the \emph{objective function} $\psi$ attains a minimum value. Only real-time measurements of~\cref{eq:05} can be utilized in a feedback law. Additional information about $\psi$, like its derivative, is not available. Also the current configuration $q$ and the current velocity $\dot{q}$ are treated as unknown quantities.

Now we describe the proposed control strategy. To this end, we introduce suitable functions and parameters. First, we choose a smooth real-valued function $\alpha$ on $\mathbb{R}$ such that the product of $\alpha$ and its derivative $\alpha'$ is positive and strictly increasing. For example,%
\begin{equation}\label{eq:06}
\alpha(z) \ := \ \sqrt{{z}+\log(2\cosh{z})}
\end{equation}%
defines such a function $\alpha$, because $\alpha\alpha'=(1+\tanh)/2$. We explain the purpose of $\alpha$ later in \Cref{rmk:02}. Next, we choose a measurable and bounded $\mathbb{R}^n$-valued map $u$ on $\mathbb{R}$, which needs to be $T$-periodic and zero-mean for some positive period $T$. The component functions $u^1,\ldots,u^n$ of $u$ will act as dither signals in the control law. Let $U$ denote the zero-mean antiderivative of $u$. We demand that the dither signals are chosen in such a way that the component functions $U^1,\ldots,U^n$ of $U$ satisfy the orthonormality condition%
\begin{equation}\label{eq:07}
\frac{1}{T}\int_0^TU^i(\tau)\,U^j(\tau)\,\mathrm{d}\tau \ = \ \left\{ \begin{tabular}{cl} $1$ & if $i=j$, \\ 0 & if $i\neq{j}$ \end{tabular}\right.
\end{equation}%
for all indices $i,j$. For instance, we can define the component functions of $u$ by%
\begin{equation}\label{eq:08}
u^i(\tau) \ := \ \sqrt{2}\,i\,\cos(i\,\tau)
\end{equation}%
to satisfy~\cref{eq:07} with common period $T:=2\pi$. Finally, we choose vectors $\lambda$ and $\mu$ in $\mathbb{R}^n$ with components $\lambda^1,\ldots,\lambda^n$ and $\mu^1,\ldots,\mu^n$, which are specified later in equations~\cref{eq:15,eq:16} and also in \Cref{rmk:03}. For each input channel $\mathrm{u}^i$ in~\cref{eq:04}, we propose the parameter- and time-dependent output feedback control law%
\begin{equation}\label{eq:09}
\mathrm{u}^i \ = \ \omega\,u^i(\omega{t})\,\lambda^i\,\alpha(\mathrm{y}-\eta) + \mu^i\,\alpha^2(\mathrm{y}-\eta),
\end{equation}%
where $\omega$ is a positive real control parameter and $\eta$ is the real-valued state of the high-pass filter%
\begin{equation}\label{eq:10}
\dot{\eta} \ = \ -h\,\eta + h\,\mathrm{y}
\end{equation}%
with filter output $\mathrm{y}-\eta$ and positive gain $h$ to remove a possible offset from the measured output signal $\mathrm{y}$. We will see in the next subsection that control law~\cref{eq:09} gives access to descent directions of the objective function $\psi$.

%---------------------------------------------------------------------------------------------------------------------------------------------------------
%                                                                     Section 4.2
%---------------------------------------------------------------------------------------------------------------------------------------------------------

\subsection{Change of variables and averaged system}\label{sec:04.2}
By applying~\cref{eq:09} to~\cref{eq:04}, we obtain the closed-loop system%
\begin{subequations}\label{eq:11}%
\begin{align}
\nabla_{\dot{q}}\dot{q} & \ = \ Y_0(q) - R(q)\dot{q} \label{eq:11:a} \allowdisplaybreaks \\
& \qquad + \sum_{i=1}^n\mu^i\,\alpha^2(\psi(q)-\eta)\,Y_i(q) \label{eq:11:b} \allowdisplaybreaks \\
& \qquad + \sum_{i=1}^n\omega\,u^i(\omega{t})\,\lambda^i\,\alpha(\psi(q)-\eta)\,Y_i(q), \label{eq:11:c} \allowdisplaybreaks \\
\dot{\eta} & \ = \ -h\,\eta + h\,\psi(q). \label{eq:11:d}
\end{align}%
\end{subequations}%
Note that~\cref{eq:11:a,eq:11:b,eq:11:c} is a second-order differential equation on $Q$. Let $TQ$ denote the tangent bundle of $Q$. If we introduce the velocity $v:=\dot{q}$ as an additional variable, then~\cref{eq:11} can be equivalently written as a first-order differential equation on the product manifold $M$ of $TQ$ and $\mathbb{R}$. For this first-order system on $M$, we consider the change of variables%
\begin{subequations}\label{eq:12}%
\begin{align}
\tilde{q} & \ = \ q, \qquad \tilde{\eta} \ = \ \eta, \label{eq:12:a} \allowdisplaybreaks \\
\tilde{v} & \ = \ v - \sum_{i=1}^nU^i(\omega{t})\,\lambda^i\,\alpha(\psi(q)-\eta)\,Y_i(q), \label{eq:12:b}
\end{align}%
\end{subequations}%
which shifts the velocity $v$ within the tangent space at $q$ while leaving the configuration $q$ and the filter state $\eta$ unchanged. Following the averaging analysis in~\cite{Bullo2002,BulloBook}, we will see that the solutions of~\cref{eq:11} in the variables \cref{eq:12} approximate the solutions of a system with so-called symmetric products of the vector fields in \cref{eq:11:c} on the right-hand side. To make this precise, fix an arbitrary high-pass filter state $\eta\in\mathbb{R}$ and, for every $i\in\{1,\ldots,n\}$, define a vector field $X_i$ on $Q$ by%
\begin{equation}\label{eq:13}
X_i(q) \ := \ \lambda^i\,\alpha(\psi(q)-\eta)\,Y_i(q).
\end{equation}%
For all $i,j\in\{1,\ldots,n\}$, the \emph{symmetric product} of $X_i$ and $X_j$ with respect to $\nabla$ is the vector field%
\begin{equation*}
\langle{X_i\colon\!{X_j}}\rangle \ := \ \nabla_{X_j}X_{i} + \nabla_{X_i}X_j
\end{equation*}%
on $Q$; see~\cite{BulloBook}. (Alternatively, one can express the symmetric product of vector fields on $Q$ in terms of iterated Lie brackets of vector fields on $TQ$. However, we do not go into the mathematical details here, but instead we refer the interested reader to the textbook~\cite{BulloBook}.) Since the employed dither signals are assumed to satisfy the orthonormality condition \cref{eq:07}, our approach only leads to an approximation of symmetric products of each $X_i$ with itself. We compute%
\begin{align*}
-\tfrac{1}{2}\langle{X_i\colon\!{X_i}}\rangle(q) & \ = \ -\tfrac{1}{2}\alpha^2(\psi(q)-\eta)\,(\lambda^i)^2\,\langle{Y_i}\colon\!{Y_i}\rangle(q) \allowdisplaybreaks \\
& - (\alpha\alpha')(\psi(q)-\eta)\,(\lambda^i)^2\,(Y_i\psi)(q)\,Y_i(q)
\end{align*}%
for every $i\in\{1,\ldots,n\}$ and every $q\in{Q}$, where $(Y_i\psi)(q)$ denotes the Lie derivative of $\psi$ along $Y_i$ at $q$ (see~\cite{BulloBook}). If we replace the oscillatory terms in \cref{eq:11:c} by the above symmetric products, then we obtain the \emph{averaged system}%
\begin{subequations}\label{eq:14}%
\begin{align}
\nabla_{\dot{\bar{q}}}\dot{\bar{q}} & \ = \ Y_0(\bar{q}) - R(\bar{q})\dot{\bar{q}} \label{eq:14:a} \allowdisplaybreaks \\
& \qquad + \alpha^2(\psi(\bar{q})-\bar{\eta})\sum_{i=1}^n\mu^i\,Y_i(\bar{q}) \label{eq:14:b} \allowdisplaybreaks \\
& \qquad - \frac{1}{2}\,\alpha^2(\psi(\bar{q})-\bar{\eta})\sum_{i=1}^n(\lambda^i)^2\,\langle{Y_i}\colon\!{Y_i}\rangle(\bar{q}) \label{eq:14:c} \allowdisplaybreaks \\
& \qquad - (\alpha\alpha')(\psi(\bar{q})-\bar{\eta})\sum_{i=1}^n(\lambda^i)^2\,(Y_i\psi)(\bar{q})\,Y_i(\bar{q}), \label{eq:14:d} \allowdisplaybreaks \\
\dot{\bar{\eta}} & \ = \ -h\,\bar{\eta} + h\,\psi(\bar{q}). \label{eq:14:e}
\end{align}%
\end{subequations}%
\begin{remark}\label{rmk:02}
The expression in \cref{eq:14:d} is of particular interest for our objective to minimize the value of $\psi$. Note that if the Lie derivative of $\psi$ along $Y_i$ at some $q\in{Q}$ is nonzero, then the tangent vector $-(Y_i\psi)(q)\,Y_i(q)$ points into a descent direction of $\psi$ at $q$. This explains why we demand that $\alpha\alpha'$ only takes positive values: If at least one of the Lie derivatives $Y_i\psi$ is nonzero at $q\in{Q}$, then the vector in \cref{eq:14:d} points into a descent direction of $\psi$. The additional contribution in \cref{eq:14:c} from the symmetric product approximation can be seen as an undesired remainder. In certain situations, a suitable choice of the vector $\mu$ can lead to  the effect that the contribution in \cref{eq:14:b} eliminates the undesired remainder in \cref{eq:14:c}. We consider this case in \Cref{sec:04.3}
\end{remark}
As for the closed-loop system \cref{eq:11}, we can write \cref{eq:14} equivalently as a first-order differential equation on $TQ\times\mathbb{R}$. In the terminology of \Cref{def:04} with $M=TQ\times\mathbb{R}$, we can state the following approximation result.
\begin{proposition}\label{prop:02}
The solutions of \cref{eq:11} in the variables \cref{eq:12} approximate the solutions of \cref{eq:14}.
\end{proposition}
The proof of \Cref{prop:02} is similar to that of Theorem~9.32 in~\cite{BulloBook}. Because of space limitations, we omit the proof. (References to alternative treatments of averaging in mechanical systems subject to oscillatory controls, such as~\cite{Baillieul1995,Levi1999}, are given in~\cite{BulloBook}.) An immediate consequence of \Cref{prop:02,prop:01} is the following stability result in the terminology of \Cref{def:03,rmk:01} with $M=TQ\times\mathbb{R}$.
\begin{corollary}\label{cor:01}
For every nonempty and compact subset $K$ of $TQ\times\mathbb{R}$ and every neighborhood $S$ of $K$ in $TQ\times\mathbb{R}$, the following implication holds: If $K$ is $S$-uniformly asymptotically stable for \cref{eq:14}, then $K$ is $S$-practically uniformly asymptotically stable for \cref{eq:11} in the variables \cref{eq:12}.
\end{corollary}
In general, stability of the averaged system \cref{eq:14} is rather difficult to check. We present simple sufficient (but not necessary) conditions in the next subsection.

%---------------------------------------------------------------------------------------------------------------------------------------------------------
%                                                                     Section 4.3
%---------------------------------------------------------------------------------------------------------------------------------------------------------

\subsection{Stability conditions for the averaged system}\label{sec:04.3}
In this subsection, we additionally assume that $\nabla$ is the \emph{Levi-Civita connection} associated with a \emph{Riemannian metric} $\langle\!\langle\cdot,\cdot\rangle\!\rangle$ on $Q$ (see~\cite{BulloBook}). Then, the \emph{gradient of $\psi$}, denoted by $\operatorname{grad}\psi$, is the unique vector field on $Q$ such that $\langle\!\langle\operatorname{grad}\psi,Y\rangle\!\rangle=Y\psi$ for every vector field $Y$ on $Q$. We assume that the drift vector field $Y_0$ vanishes and that the components of the vectors $\lambda$ and $\mu$ in control law \cref{eq:09} can be chosen in such a way that%
\begin{align}
\sum_{i=1}^n(\lambda^i)^2\,(Y_i\psi)(q)\,Y_i(q) & \ = \ \kappa\,\operatorname{grad}\psi(q), \label{eq:15} \allowdisplaybreaks \\
\frac{1}{2}\sum_{i=1}^n(\lambda^i)^2\,\langle{Y_i\colon\!Y_i}\rangle(q) & \ = \ \sum_{i=1}^n\mu^i\,Y_i(q) \label{eq:16}
\end{align}%
at every point $q$ of $Q$, where $\kappa$ is a positive real constant (independent of $q$). Then the averaged system \cref{eq:14} reduces to%
\begin{subequations}\label{eq:17}%
\begin{align}
\nabla_{\dot{\bar{q}}}\dot{\bar{q}} & \ = \ - R(\bar{q})\dot{\bar{q}} - \kappa\,(\alpha\alpha')(\psi(\bar{q})-\bar{\eta})\,\operatorname{grad}\psi(\bar{q}), \label{eq:17:a} \allowdisplaybreaks \\
\dot{\bar{\eta}} & \ = \ -h\,\bar{\eta} + h\,\psi(\bar{q}). \label{eq:17:b}
\end{align}%
\end{subequations}%
In general, we cannot expect that there exist vectors $\lambda$ and $\mu$ such that \cref{eq:15,eq:16} hold. The subsequent remark describes a class of mechanical systems for which the conditions can be satisfied. Particular examples are given in \Cref{sec:05}.
\begin{remark}\label{rmk:03}
Suppose that $Q$ is a Lie group and that $\langle\!\langle\cdot,\cdot\rangle\!\rangle$ is left-invariant (see~\cite{BulloBook}). Suppose that the control vector fields $Y_1,\ldots,Y_n$ are left-invariant and that they form an orthogonal basis for the tangent space at some point $q_0$ of $Q$ with respect to $\langle\!\langle\cdot,\cdot\rangle\!\rangle_{q_0}$. Choose an arbitrary positive real number $\kappa$ and define the components of $\lambda$ by $\lambda^i:=\sqrt{\kappa}/\|Y_i(q_0)\|_{q_0}$. Then one can check that \cref{eq:15} is satisfied for every smooth real-valued function $\psi$ on $Q$ and every $q\in{Q}$. Since $\nabla$ is left-invariant, the expression on the left-hand side of \cref{eq:16} defines a left-invariant vector field on $Q$. Since the vector fields $Y_1,\ldots,Y_n$ form a basis for the space of left-invariant vector fields on $Q$, there exists a unique vector $\mu$ in $\mathbb{R}^n$ such that also \cref{eq:16} is satisfied at every point $q$ of $Q$.
\end{remark}
\begin{remark}\label{rmk:04}
The general stability implication in \Cref{cor:01} allows a general drift vector field $Y_0$: If the averaged system with drift is asymptotically stable, then we may conclude that the closed-loop system with drift is practically uniformly asymptotically stable. Sufficient conditions for asymptotic stability of the averaged system in the presence of a nonvanishing drift are left to future research. One can expect that the averaged system will still have certain stability properties if the impact of the drift vector field is sufficiently weak. On the other hand, if the destabilizing drift is stronger than the stabilizing gradient vector field, then, of course, we cannot expect stability. One possible attempt to overpower the drift is to choose sufficiently large gains $\lambda^i,\mu^i>0$ in control law~\cref{eq:09}. This amplifies the gradient vector field in the averaged system.
\end{remark}
To prove asymptotic stability for \cref{eq:17}, we introduce a suitable candidate Lyapunov function $V$ on $TQ\times\mathbb{R}$ by%
\begin{equation}\label{eq:18}
V(x) \ := \ \tfrac{1}{2}\,\|v\|_{q}^2 + \kappa\,(\alpha\alpha')(0)\,(\psi(q) \color{red} - y_\ast \color{black}) + \kappa\,\beta(\psi(q)-\eta)
\end{equation}%
for every $x=((q,v),\eta)\in{TQ}\times\mathbb{R}$, where $\beta\colon\mathbb{R}\to\mathbb{R}$ is defined by $\beta(z):=\int_0^z\big((\alpha\alpha')(\tilde{z})-(\alpha\alpha')(0)\big)\mathrm{d}\tilde{z}$. The first two terms on the right-hand side of \cref{eq:18} can be interpreted as the sum of kinetic and potential energy. The additional last term in \cref{eq:18} is due to the high-pass filter in \cref{eq:17:b}. The objective function $\psi$ is assumed to satisfy the following conditions.
\begin{assumption}\label{ass:01}
There exist $y_\ast,y_0\in\mathbb{R}$ with $y_\ast<y_0$ such that
\begin{enumerate}
	\item $y_\ast$ is the global minimum value of $\psi$;
	\item the $y_0$-sublevel set of $\psi$ is compact;
	\item $\operatorname{grad}\psi(q)\neq{0}$ for every $q\in{Q}$ with $y_\ast<\psi(q)\leq{y_0}$.
\end{enumerate}
\end{assumption}
As an abbreviation, for every $y\in\mathbb{R}$, we introduce the scaled sublevel set%
\begin{equation}\label{eq:19}
S(y) \ := \ \big\{x\in{M} \ \big| \ V(x) \ \leq \kappa(\alpha\alpha')(0)(y\color{red} - y_\ast \color{black})\big\}
\end{equation}%
of $V$. We say that the bundle map $-R$ in \cref{eq:17:a} is \emph{strictly dissipative} if $-\langle\!\langle{R(q)v},v\rangle\!\rangle_q<0$ for every $q\in{Q}$ and every $v\in{T_qQ}$ with $v\neq0$. Now we can state the following stability result for the averaged system in the terminology of \Cref{def:03} with $M=TQ\times\mathbb{R}$.
\begin{theorem}\label{thm:01}
Suppose that the bundle map $-R$ in \cref{eq:17} is strictly dissipative. Suppose that \Cref{ass:01} is satisfied with $y_\ast,y_0$ as therein. Then%
\begin{equation*}
S(y_\ast) \ = \ \big\{((q,v),\eta)\in{TQ}\times\mathbb{R} \ \big| \ \psi(q)=y_\ast=\eta, \ v=0\big\}
\end{equation*}%
is $S(y_0)$-uniformly asymptotically stable for \cref{eq:17}.
\end{theorem}
The proof of \Cref{thm:01} is similar to that of Theorem~6.47 in~\cite{BulloBook}. Because of space limitations, we omit the proof. Note that \Cref{thm:01,cor:01} imply practical asymptotic stability for the extremum seeking closed-loop system.

%---------------------------------------------------------------------------------------------------------------------------------------------------------
%---------------------------------------------------------------------------------------------------------------------------------------------------------
%                                                                     Section 5
%---------------------------------------------------------------------------------------------------------------------------------------------------------
%---------------------------------------------------------------------------------------------------------------------------------------------------------

\section{Examples}\label{sec:05}
In this section, we apply the theory from \Cref{sec:04} to two particular extremum seeking problems. Further applications can be found in~\cite{Suttner20192,Suttner20193}.

%---------------------------------------------------------------------------------------------------------------------------------------------------------
%                                                                     Section 5.1
%---------------------------------------------------------------------------------------------------------------------------------------------------------

\subsection{Planar rigid body with two thrusters}
In this first example, we study the problem of sources seeking with a thruster-driven planar rigid body. The system we consider here can be thought of as a model for a simplified hovercraft, as depicted in \Cref{fig:01}. Controllability properties of this model are investigated in~\cite{Lewis19972}. We refer to~\cite{BulloBook} for a detailed description of the system and its dynamics. The configuration manifold is $Q:=\mathbb{R}^2\times\mathbb{S}^1$, where $\mathbb{S}^1$ is the unit circle in $\mathbb{R}^2$. An element $q=(p,o)$ of $Q$ represents the current position $p\in\mathbb{R}^2$ of the center of mass and the current orientation $o\in\mathbb{S}^1$ of the body. A natural coordinate chart for the three-dimensional manifold $Q$ is of the form $(p^1,p^2,\theta)$, where $p^1,p^2$ are the components of the position vector and $\theta$ is an angle to describe the orientation. The rigid body is assumed to have mass $m>0$ and principal moment of inertia $J>0$. Two control forces can be applied at a point that is distance $\ell>0$ from the center of mass. One of these forces acts along a fixed direction while the other force acts along a variable direction as shown in \Cref{fig:01}. The current direction of the second force is described by an angle $\phi$. In natural coordinates $(p^1,p^2,\theta)$, the equations of motion are assumed to be of the form %
\begin{subequations}\label{eq:20}%
\begin{align}
m\,\ddot{p}^1 & \ = \ - r_1\,\dot{p}^1 + f^1\,\cos\theta + f^2\,\cos(\theta+\phi), \allowdisplaybreaks \\
m\,\ddot{p}^2 & \ = \ - r_2\,\dot{p}^2 + f^1\,\sin\theta + f^2\,\sin(\theta+\phi), \allowdisplaybreaks \\
J\,\ddot{\theta} & \ = \ - r_3\,\dot{\theta} - \ell\,f^2\,\sin\phi,
\end{align}%
\end{subequations}%
where $f^1,f^2$ are real-valued input channels for the control forces, and $r_1,r_2,r_3$ are non-negative real numbers to describe velocity-dependent dissipation.

We also assume that the planar body is equipped with a suitable sensor at its center of mass so that it can measure the value of a purely position-dependent signal. The signal is assumed to be given by an unknown smooth real-valued function $\psi$ on $\mathbb{R}^2$. A measurement of $\psi$ in the configuration $q=(p,o)\in{Q}$ results in the value%
\begin{equation}\label{eq:21}
\mathrm{y} \ = \ \psi(p).
\end{equation}%
We are interested in a feedback law that requires only real-time measurements of \cref{eq:21} and stabilizes the closed-loop system about positions where $\psi$ attains a minimum value. We propose the following control strategy. As in \Cref{sec:04.1}, let $\alpha$ be a smooth real-valued function on $\mathbb{R}$ such that $\alpha\alpha'$ is positive and strictly increasing. We apply%
\begin{equation*}
f^1 \, = \, \mu\,\alpha^2(\mathrm{y}-\eta), \quad f^2 \, = \, \sqrt{2}\,\lambda\,\omega\,\alpha(\mathrm{y}-\eta), \quad \phi \, = \, \omega{t}
\end{equation*}%
to \cref{eq:20}, where $\lambda,\mu$ are real constants which are be specified later in \cref{eq:24}, $\omega$ is a positive real control parameter, $\mathrm{y}$ is the measured signal \cref{eq:21}, and $\eta$ is the high-pass filter \cref{eq:10}. Using well-known trigonometric identities, we obtain the coordinate representation%
\begin{subequations}\label{eq:22}%
\begin{align}
\dot{p}^1 & \ = \ w^1, \qquad \dot{p}^2 \ = \ w^2, \qquad \dot{\theta} \ = \ \Omega, \allowdisplaybreaks \\
m\,\dot{w}^1 & \ = \ - r_1\,w_1 + \mu\,\alpha^2(\psi(p)-\eta)\,\cos\theta \allowdisplaybreaks \\
& \qquad + \omega\,u^1(\omega{t})\,\lambda\,\alpha(\psi(p)-\eta)\,\cos\theta \allowdisplaybreaks \\
& \qquad - \omega\,u^2(\omega{t})\,\lambda\,\alpha(\psi(p)-\eta)\,\sin\theta, \allowdisplaybreaks \\
m\,\dot{w}^2 & \ = \ - r_2\,w_2 + \mu\,\alpha^2(\psi(p)-\eta)\,\sin\theta \allowdisplaybreaks \\
& \qquad + \omega\,u^1(\omega{t})\,\lambda\,\alpha(\psi(p)-\eta)\,\sin\theta \allowdisplaybreaks \\
& \qquad + \omega\,u^2(\omega{t})\,\lambda\,\alpha(\psi(p)-\eta)\,\cos\theta, \allowdisplaybreaks \\
J\,\dot{\Omega} & \ = \ - r_3\,\Omega - \omega\,u^2(\omega{t})\,\ell\,\lambda\,\alpha(\psi(p)-\eta), \allowdisplaybreaks \\
\dot{\eta} & \ = \ -h\,\eta + h\,\psi(p)
\end{align}%
\end{subequations}%
for the closed-loop system on $TQ\times\mathbb{R}$, where the real-valued functions $u^1,u^2$ on $\mathbb{R}$ are defined by $u^1(\tau):=\sqrt{2}\cos(\tau)$, $u^2(\tau):=\sqrt{2}\sin(\tau)$. The zero-mean antiderivatives $U^1,U^2$ of $u^1,u^2$ are given by $U^1(\tau)=\sqrt{2}\sin(\tau)$, $U^2(\tau)=-\sqrt{2}\cos(\tau)$, which obviously satisfy \cref{eq:07} with $T:=2\pi$.
\begin{figure}%
\centering$\begin{matrix}\begin{matrix}\includegraphics{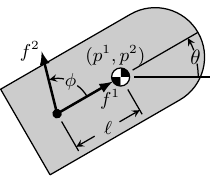}\\\vphantom{.}\end{matrix}&\begin{matrix}\includegraphics{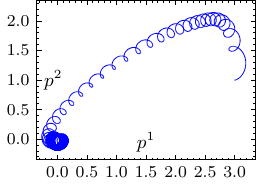}\end{matrix}\end{matrix}$%
\caption{Left: A planar rigid body with one direction-fixed thruster and one direction-variable thruster. Right: Simulation result for the position of a source-seeking planar rigid body, where the source is at the origin.}%
\label{fig:01}%
\end{figure}

Note that \cref{eq:22} is a particular case of the general closed-loop system \cref{eq:11}. To make this apparent, let $\frac{\partial}{\partial{p^1}},\frac{\partial}{\partial{p^2}},\frac{\partial}{\partial\theta}$ denote the coordinate vector fields of a natural chart $(p^1,p^2,\theta)$ for $Q$. Define a trivial connection $\nabla$ on $Q$ by asking that $\nabla_{\frac{\partial}{\partial{a}}}\frac{\partial}{\partial{b}}=0$ for all $a,b\in\{p^1,p^2,\theta\}$. Define a smooth bundle map $R$ over $Q$ by $R\frac{\partial}{\partial{p^i}}:=\frac{r_i}{m}\frac{\partial}{\partial{p^i}}$ for $i=1,2$ and $R\frac{\partial}{\partial\theta}:=\frac{r_3}{J}\frac{\partial}{\partial\theta}$. Define the vector fields $Y_1:=\hphantom{-}\frac{1}{m}\cos\theta\frac{\partial}{\partial{p^1}} + \frac{1}{m}\sin\theta\frac{\partial}{\partial{p^2}}$ and $Y_2:=-\frac{1}{m}\sin\theta\frac{\partial}{\partial{p^1}} + \frac{1}{m}\cos\theta\frac{\partial}{\partial{p^2}} - \frac{\ell}{J}\frac{\partial}{\partial\theta}$ on $Q$. By a slight abuse of notation, we denote the canonical extension of $\psi$ to a function on $Q$ by the same symbol. That is, we write $\psi(q)=\psi(p)$ for every $q=(p,o)\in{Q}$. Finally, set $\lambda^1:=\lambda^2:=\lambda$, $\mu^1:=\mu$, and $\mu^2:=0$. Now it is easy to see that the system with coordinate representation \cref{eq:22} is of the form \cref{eq:11} with vanishing drift $Y_0$ and $n:=2$ control vector fields.

We follow the general analysis in \Cref{sec:04.2}. One can check that the change of variables in \cref{eq:12} for the particular case of \cref{eq:22} has the coordinate representation%
\begin{subequations}\label{eq:23}%
\begin{align}
\tilde{p}^1 & \ = \ p^1, \qquad \tilde{p}^2 \ = \ p^2, \qquad \tilde{\theta} \ = \ \theta, \qquad \tilde{\eta} \ = \ \eta, \label{eq:23:a} \allowdisplaybreaks \\
\tilde{w}^1 & \ = \ w^1 - U^1(\omega{t})\,\tfrac{\lambda}{m}\,\alpha(\psi(p)-\eta)\,\cos\theta \label{eq:23:b} \allowdisplaybreaks \\
&\hphantom{{} \ = \ w^1 {}} + U^2(\omega{t})\,\tfrac{\lambda}{m}\,\alpha(\psi(p)-\eta)\,\sin\theta, \label{eq:23:c} \allowdisplaybreaks \\
\tilde{w}^2 & \ = \ w^2 - U^1(\omega{t})\,\tfrac{\lambda}{m}\,\alpha(\psi(p)-\eta)\,\sin\theta \label{eq:23:d} \allowdisplaybreaks \\
&\hphantom{{} \ = \ w^2 {}} - U^2(\omega{t})\,\tfrac{\lambda}{m}\,\alpha(\psi(p)-\eta)\,\cos\theta, \label{eq:23:e} \allowdisplaybreaks \\
\tilde{\Omega} & \ = \ \Omega - U^2(\omega{t})\,\tfrac{\ell\,\lambda}{J}\,\alpha(\psi(p)-\eta). \label{eq:23:f}
\end{align}%
\end{subequations}%
Next, we investigate the averaged system \cref{eq:14}. A direct computation shows that the symmetric products in \cref{eq:14:c} are given by $\langle{Y_1}\colon\!{Y_1}\rangle=0$, $\langle{Y_2}\colon\!{Y_2}\rangle=\tfrac{2\,\ell}{m\,J}\,Y_1$. This motivates us to select the constants%
\begin{equation}\label{eq:24}
\lambda \ := \ \sqrt{\kappa\,m}, \qquad \mu \ := \ \kappa\,\ell/J
\end{equation}%
with some $\kappa>0$. Now it is straightforward to check that the averaged system \cref{eq:14} is a cascade of a first-order system on $T\mathbb{R}^2\times\mathbb{R}$, represented by%
\begin{subequations}\label{eq:25}%
\begin{align}
\dot{\bar{p}}^1 & \ = \ \bar{w}^1, \qquad \dot{\bar{p}}^2 \ = \ \bar{w}^2, \allowdisplaybreaks \\
m\,\dot{\bar{w}}^1 & \ = \ - r_1\,\bar{w}_1 - \kappa\,(\alpha\alpha')(\psi(\bar{p})-\bar{\eta})\,\tfrac{\partial\psi}{\partial{p^1}}(p), \allowdisplaybreaks \\
m\,\dot{\bar{w}}^2 & \ = \ - r_2\,\bar{w}_2 - \kappa\,(\alpha\alpha')(\psi(\bar{p})-\bar{\eta})\,\tfrac{\partial\psi}{\partial{p^2}}(p), \allowdisplaybreaks \\
\dot{\bar{\eta}} & \ = \ -h\,\bar{\eta} + h\,\psi(\bar{p}),
\end{align}%
\end{subequations}%
followed by a first-order system on $T\mathbb{S}^1$, represented by%
\begin{subequations}\label{eq:26}%
\begin{align}
\dot{\bar{\theta}} & \ = \ \bar{\Omega}, \allowdisplaybreaks \\
J\,\dot{\bar{\Omega}} & \ = \ - r_3\,\bar{\Omega} + \mu\,\upsilon_1(\bar{p},\bar{\eta})\sin\theta - \mu\,\upsilon_2(\bar{p},\bar{\eta})\,\cos\theta,
\end{align}%
\end{subequations}%
where the system state of \cref{eq:25} enters through the ``inputs''%
\begin{equation}\label{eq:27}
\upsilon_i(\bar{p},\bar{\eta}) \ := \ (\alpha\alpha')(\psi(\bar{p})-\bar{\eta})\,\tfrac{\partial\psi}{\partial{p^i}}(\bar{p}), \qquad i=1,2.
\end{equation}%
To present a semi-global stability result for \cref{eq:22}, we need the following globalized version of \Cref{ass:01}.
\begin{assumption}\label{ass:02}
There exists $y_\ast\in\mathbb{R}$ such that
\begin{enumerate}
	\item $y_\ast$ is the global minimum value of $\psi$;
	\item $y_\ast<\hat{y}:=\sup\{\psi(p)\,|\,p\in\mathbb{R}^2\}\in\mathbb{R}\cup\{+\infty\}$;
	\item for every $y_0<\hat{y}$, the $y_0$-sublevel set of $\psi$ is compact;
	\item at every $p\in\mathbb{R}^2$ with $\psi(p)>y_\ast$, the derivative of $\psi$ is nonzero.
\end{enumerate}
\end{assumption}
Now we can state the following stability result for \cref{eq:22} in the terminology of \Cref{def:03,rmk:01} with $M=TQ\times\mathbb{R}$.
\begin{theorem}\label{thm:02}
Define the constants $\lambda,\mu$ in \cref{eq:22} by \cref{eq:24}. Suppose that the constants $r_1,r_2,r_3$ in \cref{eq:22} are positive. Suppose that \Cref{ass:02} is satisfied with $y_\ast$ as therein. Then the set%
\begin{equation*}
K := \big\{(((p,o),v),\eta)\in{TQ}\times\mathbb{R} \ \big| \ \psi(p)=y_\ast=\eta, \ v=0\big\}
\end{equation*}%
is semi-globally practically uniformly asymptotically stable for the system with coordinate representation \cref{eq:22} in the variables~\cref{eq:23}.
\end{theorem}
\begin{proof}
Note that \cref{eq:25} represents a system of the form \cref{eq:17}. Because of the global \Cref{ass:02}, one can use \Cref{thm:01} to prove that the compact subset%
\begin{equation*}
P \ := \ \big\{((\bar{p},\bar{w}),\bar{\eta})\in{T\mathbb{R}^2}\times\mathbb{R} \ \big| \ \psi(p)=y_\ast=\eta, \ w=0\big\}
\end{equation*}%
of ${T\mathbb{R}^2}\times\mathbb{R}$ is globally asymptotically stable for \cref{eq:25}. Using that $r_3$ is positive, one can also show that the compact subset $\Theta:=\mathbb{S}^1\times\{0\}$ of $T\mathbb{S}^1$ is input-to-state stable for \cref{eq:26} with \cref{eq:27} as ``inputs''. We refer to~\cite{Sontag1996} for a definition of input-to-state stability with respect to compact sets. The ``inputs'' \cref{eq:27} of system \cref{eq:26} vanish if the system state of \cref{eq:25} is an element of $P$. This implies that the set $P\times\Theta$ is globally uniformly asymptotically stable for the cascade of \cref{eq:25} and \cref{eq:26}. In other words, the set $K$ in \Cref{thm:02} is globally asymptotically stable for the averaged system of \cref{eq:22} in the variables \cref{eq:23}. Therefore the claim follows from \Cref{cor:01}.
\end{proof}
We end this examples by providing a numerical simulation result. For this purpose, we generate the signal measurements in \cref{eq:21} by $\psi(p):=-4\exp(-|p|^2/4)$ for every $p\in\mathbb{R}^2$. This defines a function $\psi$ which satisfies \Cref{ass:02}. The function $\alpha$ is defined by \cref{eq:06}. We set the constants $m,\kappa,\ell,h,r_1,r_2,r_3$ equal to $1$. The constants $\lambda,\mu$ are defined by \cref{eq:24}. Then, all assumptions of \Cref{thm:02} are satisfied, and therefore the set of desired states is semi-globally practically uniformly asymptotically stable for the system with coordinate representation \cref{eq:22} in the variables \cref{eq:23}. We generate numerical data for initial time $t_0:=0$, initial position $p_0:=(3,1)$, initial orientation $o_0:=(1,0)$, zero initial velocity, and initial filter state $\eta_0:=0$. It turns out that the parameter $\omega:=10$ is sufficiently large to ensure converge of the system state into a neighborhood of the set $K$ in \Cref{thm:02}. We can see in the plot on the right of \Cref{fig:01} that the second component $t\mapsto{p^2(t)}$ of the position $t\mapsto{p(t)}$ first increases from $p^2(0)=1$ to values $\approx2$ and then converges into a neighborhood of the origin. This behavior can be explained by the change of variables \cref{eq:23}. If we apply \cref{eq:23:b}-\cref{eq:23:e} to the initial velocity components $w^1=0$ and $w^2=0$ at time $t=0$, then we get $\tilde{w}^1=0$ and $\tilde{w}^2=\sqrt{2\kappa/m}\alpha(\psi(p_0)-\eta_0)$. Thus, by \Cref{prop:02} and \Cref{def:04}, the closed-loop system approximates the behavior of an averaged system with initial velocity components $\tilde{w}^1=0$ and $\tilde{w}^2>0$. This explains the initial increase of $t\mapsto{p^2(t)}$ in \Cref{fig:01}.

%---------------------------------------------------------------------------------------------------------------------------------------------------------
%                                                                     Section 5.2
%---------------------------------------------------------------------------------------------------------------------------------------------------------

\subsection{Fully actuated rigid body fixed at a point}
As a second illustrative example of the results in \Cref{sec:04}, we consider a problem of attitude control for a fully actuated rigid body fixed at a point. We refer to~\cite{BulloBook} for an introduction to rigid body dynamics. The model we consider here is often used to describe the attitude dynamics of satellites. The configuration manifold $Q$ is the Lie group $\operatorname{SO}(3)$ of real $3\times{3}$ rotation matrices. It is known that the tangent space to $\operatorname{SO}(3)$ at the identity is equal to the set $\mathfrak{so}(3)$ of real skew-symmetric $3\times{3}$ matrices. Since we deal with a matrix Lie group, the tangent space at any $q\in\operatorname{SO}(3)$ is given by $q\cdot\mathfrak{so}(3)$, where ``$\cdot$'' is the usual matrix multiplication. The elements of $\mathfrak{so}(3)$ are referred to as \emph{body angular velocities}. It is easy to check that the map%
\begin{equation*}
\mathbb{R}^3\ni\Omega=\left[\begin{smallmatrix} \Omega^1 \\ \Omega^2 \\ \Omega^3 \end{smallmatrix}\right] \mapsto \hat{\Omega}:=\left[\begin{smallmatrix} 0 & -\Omega^3 & \hphantom{-}\Omega^2 \\ \hphantom{-}\Omega^3 & 0 & -\Omega^1 \\ -\Omega^2 & \hphantom{-}\Omega^1 & 0 \end{smallmatrix}\right]\in\mathfrak{so}(3)
\end{equation*}%
is a vector space isomorphism $\hat{\cdot}\colon\mathbb{R}^3\to\mathfrak{so}(3)$. Its inverse is denoted by $\check{\cdot}\colon\mathfrak{so}(3)\to\mathbb{R}^3$. Thus, every body angular velocity $v\in\mathfrak{so}(3)$ has a unique vector representation $\check{v}\in\mathbb{R}^3$. Let $e_1,e_2,e_3$ denote the standard basis for $\mathbb{R}^3$, and let $J_1,J_2,J_3$ be positive real constants. Let $\mathbb{I}$ be the inner product on $\mathfrak{so}(3)$ whose representation matrix $[\mathbb{I}]\in\mathbb{R}^{3\times{3}}$ with respect to the basis $\hat{e}_1,\hat{e}_2,\hat{e}_3$ is diagonal with diagonal elements $J_1,J_2,J_3$. From a physical point of view, a diagonal representation matrix $[\mathbb{I}]$ means that we assume that the principal axes of the rigid body are aligned with the axes of the body references frame. The constants $J_1,J_2,J_3$ are the principal moments of inertia. For all $\Omega,\tilde{\Omega}\in\mathbb{R}^3$, let $\Omega\times\tilde{\Omega}\in\mathbb{R}^3$ denote the usual cross product of $\Omega,\tilde{\Omega}$. Let $r$ be a smooth map from $\operatorname{SO}(3)$ to $\mathbb{R}^{3\times{3}}$ to describe dissipation. Using the vector representation of body angular velocities, we assume that the rotation of the rigid body can be described by the \emph{controlled Euler equations}%
\begin{subequations}\label{eq:28}%
\begin{align}
\dot{q} & \ = \ q\cdot\hat{\Omega}, \\
[\mathbb{I}]\dot{\Omega} + \Omega\times([\mathbb{I}]\Omega) & \ = \ -r(q)\Omega + \mathrm{u}
\end{align}%
\end{subequations}%
on $\operatorname{SO}(3)\times\mathbb{R}^3$, where $\mathrm{u}$ is a row vector of real-valued input channels $\mathrm{u}^1,\mathrm{u}^2,\mathrm{u}^3$ for a control law (see~\cite{BulloBook}). The inputs $\mathrm{u}^1,\mathrm{u}^2,\mathrm{u}^3$ determine the torques about the principal axes as shown in the sketch on the left of \Cref{fig:02}.

We assume that the position-fixed rigid body can measure the values of a smooth orientation dependent function $\psi\colon\operatorname{SO}(3)\to\mathbb{R}$. To stabilize the body around orientations where $\psi$ attains a minimum value, we use the method from \Cref{sec:04}. Note that control system \cref{eq:28} represents a particular case of the general control system \cref{eq:04} in \Cref{sec:04}. To make this apparent, we let $\langle\!\langle\cdot,\cdot\rangle\!\rangle$ denote the left-invariant Riemannian metric on $\operatorname{SO}(3)$ that originates from $\mathbb{I}$ via left translations. Let $\nabla$ be the left-invariant Levi-Civita connection on $\operatorname{SO}(3)$ induced by $\langle\!\langle\cdot,\cdot\rangle\!\rangle$. Define a smooth bundle map $R$ over $\operatorname{SO}(3)$ by $R(q)v:=q\cdot\big([\mathbb{I}]^{-1}{r(q)}\big(q^{-1}\cdot{v}\big)\check{\vphantom{\cdot}}\big)\hat{\vphantom{\cdot}}$ for every element $(q,v)$ of the tangent bundle of $\operatorname{SO}(3)$, where $[\mathbb{I}]^{-1}$ and $q^{-1}$ denote the inverse matrices of $[\mathbb{I}]$ and $q$, respectively. For every $i\in\{1,2,3\}$, define a smooth left-invariant vector field $Y_i$ on $\operatorname{SO}(3)$ by $Y_i(q):=\frac{1}{J_i}\,q\cdot\hat{e}_i$. Now it is easy to verify that the controlled Euler equations \cref{eq:28} on $\operatorname{SO}(3)\times\mathbb{R}^3$ represent a system of the form \cref{eq:04} on $TQ$ with vanishing drift $Y_0$ and $n:=3$ control vector fields. Thus, we can apply control law \cref{eq:09} to \cref{eq:28}, which leads to a closed-loop system on $TQ\times\mathbb{R}$ with representation%
\begin{subequations}\label{eq:29}%
\begin{align}
\dot{q} & \ = \ q\cdot\hat{\Omega}, \label{eq:29:a} \allowdisplaybreaks \\
[\mathbb{I}]\dot{\Omega} & \ = \ -(\Omega\times([\mathbb{I}]\Omega)) - r(q)\Omega \label{eq:29:b} \allowdisplaybreaks \\
& \qquad + \sum_{i=1}^n\mu^i\,\alpha^2(\psi(q)-\eta)\,e_i \label{eq:29:c} \allowdisplaybreaks \\
& \qquad + \sum_{i=1}^n\omega\,\lambda^i\,u^i(\omega{t})\,\alpha(\psi(q)-\eta)\,e_i, \label{eq:29:e} \allowdisplaybreaks \\
\dot{\eta} & \ = \ -h\,\eta + h\,\psi(q) \label{eq:29:r}
\end{align}%
\end{subequations}%
on $\operatorname{SO}(3)\times\mathbb{R}^3\times\mathbb{R}$. In \cref{eq:29}, the functions $\alpha,u^i$ are chosen as in \Cref{sec:04.1}, and the constants $\lambda^i,\mu^i$ are specified later in \cref{eq:31}. The change of variables in \cref{eq:12} is represented by%
\begin{subequations}\label{eq:30}%
\begin{align}
\tilde{q} & \ = \ q, \qquad \tilde{\eta} \ = \ \eta, \label{eq:30:a} \allowdisplaybreaks \\
\tilde{\Omega} & \ = \ \Omega - \sum_{i=1}^3U^i(\omega{t})\,\frac{\lambda^i}{J_i}\,\alpha(\psi(q)-\eta)\,e_i. \label{eq:30:b}
\end{align}%
\end{subequations}%
Note that we are in a particular situation of \Cref{rmk:03}. A direct computation shows that $\|Y_i\|=1/\sqrt{J_i}$ and $\langle{Y_i}\colon\!Y_i\rangle=0$ for every $i\in\{1,2,3\}$. Following the recipe in \Cref{rmk:03}, we set%
\begin{equation}\label{eq:31}
\lambda^i \ := \ \sqrt{J_i\kappa} \qquad \text{and} \qquad \mu^i \ := \ 0
\end{equation}%
for every $i\in\{1,2,3\}$ with some constant $\kappa>0$. Then, the averaged system reduces to \cref{eq:17}. The candidate Lyapunov function $V$ on $TQ\times\mathbb{R}$ in \cref{eq:18} is represented by%
\begin{equation*}
V(q,\Omega,\eta) \, = \, \tfrac{1}{2}\,\Omega^{\top}[\mathbb{I}]\Omega + \kappa\,(\alpha\alpha')(0)\,(\psi(q)\color{red} - y_\ast \color{black}) + \kappa\,\beta(\psi(q)-\eta)
\end{equation*}%
on $\operatorname{SO}(3)\times\mathbb{R}^3\times\mathbb{R}$, where $\Omega^\top$ denotes the transpose of the vector $\Omega$. As in \cref{eq:19}, for every $y\in\mathbb{R}$, let $S(y)$ denote the $\kappa(\alpha\alpha')(0)(y\color{red} - y_\ast \color{black})$-sublevel set of the above representation of $V$. Finally, we say that the map $-r\colon\operatorname{SO}(3)\to\mathbb{R}^{3\times{3}}$ in \cref{eq:29:b} is \emph{strictly dissipative} if $-\Omega^\top{r(q)}\Omega<0$ for every $q\in\operatorname{SO}(3)$ and every $\Omega\in\mathbb{R}^3$ with $\Omega\neq0$. Now we can state the following consequence of \Cref{thm:01,cor:01}.
\begin{theorem}\label{thm:03}
For every $i\in\{1,2,3\}$, define the constants $\lambda^i,\mu^i$ in \cref{eq:29} by \cref{eq:31}. Suppose that the map $-r$ in \cref{eq:29} is strictly dissipative. Suppose that \Cref{ass:01} is satisfied with $y_\ast,y_0$ as therein. Then the set%
\begin{equation*}
\big\{(q,\Omega,\eta)\in\operatorname{SO}(3)\times\mathbb{R}^3\times\mathbb{R} \ \big| \ \psi(q)=y_\ast=\eta,\ \Omega=0\big\}
\end{equation*}%
is $S(y_0)$-practically uniformly asymptotically stable for \cref{eq:29} in the variables \cref{eq:30}.
\end{theorem}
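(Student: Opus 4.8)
The plan is to verify that the closed-loop system \cref{eq:SO3ClosedLoopSystem} falls into the general framework of \Cref{sec:04} and then to read off the conclusion from \Cref{thm:ESCAveragedSystem,thm:corollary}, exactly as was done for \Cref{thm:planarBody}. First I would confirm that we are in the Lie group situation of \Cref{rmk:LieGroups}: the configuration manifold $\operatorname{SO}(3)$ is a Lie group, the metric $\langle\!\langle\cdot,\cdot\rangle\!\rangle$ is left-invariant by construction, and each $Y_i(q)=\tfrac{1}{J_i}\,q\cdot\hat{e}_i$ is left-invariant. At the identity $q_0=\mathrm{id}$ the vectors $Y_i(q_0)=\tfrac{1}{J_i}\hat{e}_i$ are $\langle\!\langle\cdot,\cdot\rangle\!\rangle_{q_0}$-orthogonal, because $[\mathbb{I}]$ is diagonal with respect to $\hat{e}_1,\hat{e}_2,\hat{e}_3$, and $\|Y_i(q_0)\|_{q_0}=1/\sqrt{J_i}$. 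Hence the recipe in \Cref{rmk:LieGroups} prescribes $\lambda^i=\sqrt{\kappa}/\|Y_i(q_0)\|_{q_0}=\sqrt{J_i\kappa}$, which is precisely the choice \cref{eq:SO3constants}, and guarantees that \cref{eq:lambdaI} holds with this constant $\kappa$.

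The remaining structural condition \cref{eq:muI} is where the specific geometry enters. Since the symmetric products satisfy $\langle{Y_i}\colon\!{Y_i}\rangle=0$ for every $i$, the left-hand side of \cref{eq:muI} vanishes identically; as $Y_1,Y_2,Y_3$ are linearly independent, the unique vector $\mu$ furnished by \Cref{rmk:LieGroups} is the zero vector, matching $\mu^i=0$ in \cref{eq:SO3constants}. With \cref{eq:lambdaI,eq:muI} in force, both \cref{eq:generalESCAveragedSystem:b,eq:generalESCAveragedSystem:c} vanish and the averaged system of \cref{eq:SO3ClosedLoopSystem} collapses to the reduced form \cref{eq:ESCAveragedSystem} with $Y_0=0$, together with the stated representation of the candidate Lyapunov function $V$ in \cref{eq:V} and of its sublevel sets $S(y)$ in \cref{eq:SPsi}.

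To invoke \Cref{thm:ESCAveragedSystem} I would then check its two hypotheses. Strict dissipativity of $-R$ reduces to that of $-r$: for $v\in{T_q\operatorname{SO}(3)}$, writing $\Omega:=\big(q^{-1}\cdot{v}\big)\check{\vphantom{\cdot}}$, left-invariance of the metric gives $\langle\!\langle{R(q)v},v\rangle\!\rangle_q=\Omega^\top r(q)\,\Omega$, so $-\langle\!\langle{R(q)v},v\rangle\!\rangle_q<0$ for $v\neq0$ is equivalent to $-\Omega^\top r(q)\Omega<0$ for $\Omega\neq0$, which is the assumed strict dissipativity of $-r$. Combined with \Cref{ass:psi}, \Cref{thm:ESCAveragedSystem} then yields that $S(y_\ast)$ is $S(y_0)$-uniformly asymptotically stable for \cref{eq:ESCAveragedSystem}. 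Because $v=0$ if and only if $\Omega=0$, the set $S(y_\ast)$ coincides with the equilibrium set in the statement. Finally, \Cref{thm:corollary} with $M=TQ\times\mathbb{R}$, $K=S(y_\ast)$, and $S=S(y_0)$ transfers this to $S(y_0)$-practical uniform asymptotic stability of \cref{eq:SO3ClosedLoopSystem} in the variables \cref{eq:SO3StateTransformation}, which is the claim.

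The argument is essentially a verification, so I do not expect a deep obstacle; unlike in \Cref{thm:planarBody}, no cascade decomposition is needed, since $\psi$ depends genuinely on the configuration and \Cref{thm:ESCAveragedSystem} applies directly. The only steps requiring care are the two short left-invariance computations — that $\langle{Y_i}\colon\!{Y_i}\rangle=0$ and that the bundle-map dissipativity of $-R$ agrees with the matrix dissipativity of $-r$ — together with the book-keeping that identifies the sublevel-set description $S(y_\ast)$ produced by \Cref{thm:ESCAveragedSystem} with the explicit equilibrium set stated in the theorem.
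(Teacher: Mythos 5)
Your proposal is correct and follows essentially the same route as the paper: the paper presents \Cref{thm:03} precisely as a consequence of \Cref{thm:ESCAveragedSystem,thm:corollary}, with the preceding text carrying out the same verifications you give (the Lie-group setting of \Cref{rmk:LieGroups}, $\|Y_i(q_0)\|_{q_0}=1/\sqrt{J_i}$ and $\langle{Y_i\colon\!Y_i}\rangle=0$ forcing the choice \cref{eq:SO3constants}, and the collapse of the averaged system to \cref{eq:ESCAveragedSystem}). Your additional computation identifying strict dissipativity of the bundle map $-R$ with that of the matrix map $-r$ via left-invariance, $\langle\!\langle{R(q)v},v\rangle\!\rangle_q=\Omega^\top r(q)\Omega$, is a detail the paper leaves implicit, and it is carried out correctly.
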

Finally, we test the method numerically for the following choice of functions and parameters. We define the function $\alpha$ and the dither signals $u^i$ by \cref{eq:06} and \cref{eq:08}, respectively. For every square matrix $A$, let $\operatorname{tr}(A)$ denote the trace of $A$ and let $A^\top$ denote the transpose of $A$. We generate the signal measurements $\mathrm{y}=\psi(q)$ in every configuration $q\in\operatorname{SO}(3)$ by $\psi(q):=\operatorname{tr}\big((q-I_3)^\top(q-I_3)\big)/2$, where $I_3\in\operatorname{SO}(3)$ denotes the identity matrix. One can show that \Cref{ass:01} is satisfied for $y_\ast:=0$ and every $y_0\in(0,4)$. We define the map $r$ by $r(q):=[\mathbb{I}]/5$ for every $q\in\operatorname{SO}(3)$. For every $i\in\{1,2,3\}$, we set $J_i:=i$ and we define $\lambda^i,\mu^i$ by \cref{eq:31} with $\kappa:=1/10$. Finally, we set $h:=1$. Then, all assumptions of \Cref{thm:03} are satisfied, and therefore $(I_3,0,0)\in\operatorname{SO}(3)\times\mathbb{R}^3\times\mathbb{R}$ is at least locally practically uniformly asymptotically stable for \cref{eq:29} in the variables \cref{eq:30}. We generate numerical data for initial time $t_0:=0$, initial configuration%
\begin{equation*}
q_0 \ := \ \tfrac{1}{3}\left[\begin{smallmatrix}-1 & \hphantom{-}2 & -2 \\ -2 & \hphantom{-}1 & \hphantom{-}2 \\ \hphantom{-}2 & \hphantom{-}2 & 1 \end{smallmatrix}\right] \in \operatorname{SO}(3),
\end{equation*}%
zero initial velocity, and initial filter state $\eta_0:=0$. For $\omega:=10$ we can observe in the plot on the right of \Cref{fig:02} that the configuration $q(t)$ converges into a neighborhood of $I_3$ with increasing time parameter $t$.
\begin{figure}%
\centering$\begin{matrix}\begin{matrix}\includegraphics{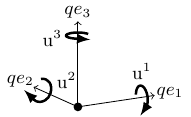}\end{matrix} \ & \ \begin{matrix}\includegraphics{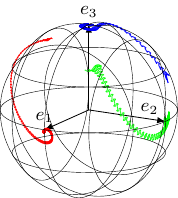}\end{matrix}\end{matrix}$%
\caption{Left: Rigid body fixed at the origin. In every configuration $q\in\operatorname{SO}(3)$, the principal axes of the body are given by the unit vectors $qe_1,qe_2,qe_3\in\mathbb{R}^3$. The control torques $\mathrm{u}^1,\mathrm{u}^2,\mathrm{u}^3$ act about the principal axes. Right: Simulation result for the maps $t\mapsto{q(t)e_1}$ (red), $t\mapsto{q(t)e_2}$ (green), $t\mapsto{q(t)e_3}$ (blue) on the unit sphere in $\mathbb{R}^3$.}%
\label{fig:02}%
\end{figure}

%---------------------------------------------------------------------------------------------------------------------------------------------------------
%---------------------------------------------------------------------------------------------------------------------------------------------------------
%                                                                     Section 6
%---------------------------------------------------------------------------------------------------------------------------------------------------------
%---------------------------------------------------------------------------------------------------------------------------------------------------------

\section{Conclusions}\label{sec:06}
We have proposed a novel extremum seeking method for affine connection mechanical control systems, which is based on approximations of symmetric products of vector fields. This differential geometric approach for second-order mechanical systems can be seen as a counterpart to existing Lie bracket approximations for first-order kinematic control systems. Our theoretical analysis has shown that practical asymptotic stability for the closed-loop system can be concluded from asymptotic stability for the associated averaged system. We have also given simple sufficient conditions to ensure that the averaged system is asymptotically stable. Our findings can be used to design extremum seeking control for many different problem, which was also illustrated by examples.

\bibliographystyle{abbrv}
\bibliography{bibFile}

\begin{thebibliography}{10}

\bibitem{AriyurBook}
K.~B. Ariyur and M.~Krsti{\'c}.
\newblock {\em {Real-Time Optimization by Extremum-Seeking Control}}.
\newblock John Wiley \& Sons, Inc., Hoboken, NJ, 2003.

\bibitem{Baillieul1995}
J.~Baillieul.
\newblock {Energy methods for stability of bilinear systems with oscillatory
  inputs}.
\newblock {\em International Journal of Robust and Nonlinear Control},
  5(4):285--301, 1995.

\bibitem{Bullo2002}
F.~Bullo.
\newblock {Averaging and Vibrational Control of Mechanical Systems}.
\newblock {\em SIAM Journal on Control and Optimization}, 41(2):542--562, 2002.

\bibitem{BulloBook}
F.~Bullo and A.~D. Lewis.
\newblock {\em {Geometric Control of Mechanical Systems}}, volume~49 of {\em
  Texts in Applied Mathematics}.
\newblock Springer, New York, 2005.

\bibitem{Crouch1981}
P.~E. Crouch.
\newblock {Geometric structures in systems theory}.
\newblock {\em IEE Proceedings D - Control Theory and Applications},
  128(5):242--252, 1981.

\bibitem{Duerr2013}
H.-B. D\"urr, M.~S. Stankovi{\'c}, C.~Ebenbauer, and K.~H. Johansson.
\newblock {Lie Bracket Approximation of Extremum Seeking Systems}.
\newblock {\em Automatica}, 49(6):1538--1552, 2013.

\bibitem{Duerr2014}
H.-B. D\"urr, M.~S. Stankovi{\'c}, K.~H. Johansson, and C.~Ebenbauer.
\newblock {Extremum Seeking on Submanifolds in the Euclidean Space}.
\newblock {\em Automatica}, 50(10):2591--2596, 2014.

\bibitem{Fu2021}
W.~Fu, J.~Qin, W.~X. Zheng, Y.~Chen, and Y.~Kang.
\newblock {Resilient Cooperative Source Seeking of Double-Integrator
  Multi-Robot Systems Under Deception Attacks}.
\newblock {\em IEEE Transactions on Industrial Electronics}, 68(5):4218--4227,
  2021.

\bibitem{Krstic2000}
M.~Krsti{\'c} and H.-H. Wang.
\newblock {Stability of extremum seeking feedback for general nonlinear dynamic
  systems}.
\newblock {\em Automatica}, 36(4):595--601, 2000.

\bibitem{Labar2019}
C.~Labar, E.~Garone, M.~Kinnaert, and C.~Ebenbauer.
\newblock {Newton-based extremum seeking: A second-order Lie bracket
  approximation approach}.
\newblock {\em Automatica}, 105:356--367, 2019.

\bibitem{Levi1999}
M.~Levi.
\newblock {Geometry and physics of averaging with applications}.
\newblock {\em Physica D: Nonlinear Phenomena}, 132(10):150--164, 1999.

\bibitem{Lewis1998}
A.~D. Lewis.
\newblock {Affine connections and distributions with applications to
  nonholonomic mechanics}.
\newblock {\em Reports on Mathematical Physics}, 42(1):135--164, 1998.

\bibitem{Lewis19972}
A.~D. Lewis and R.~M. Murray.
\newblock {Configuration Controllability of Simple Mechanical Control Systems}.
\newblock {\em SIAM Journal on Control and Optimization}, 35(3):766--790, 1997.

\bibitem{Matveev2014}
A.~S. Matveev, M.~C. Hoy, and A.~V. Savkin.
\newblock {3D environmental extremum seeking navigation of a nonholonomic
  mobile robot}.
\newblock {\em Automatica}, 50(7):1802--1815, 2014.

\bibitem{Michalowsky2014}
S.~Michalowsky and C.~Ebenbauer.
\newblock {The multidimensional $n$-th order heavy ball method and its
  application to extremum seeking}.
\newblock In {\em {Proceedings of the 53rd IEEE Conference on Decision and
  Control}}, pages 2660--2666, 2014.

\bibitem{Michalowsky2015}
S.~Michalowsky and C.~Ebenbauer.
\newblock {Model-based extremum seeking for a class of nonlinear systems}.
\newblock In {\em {Proceedings of the 2015 American Control Conference}}, pages
  2026--2031, 2015.

\bibitem{Michalowsky2020}
S.~Michalowsky, B.~Gharesifard, and C.~Ebenbauer.
\newblock {A Lie bracket approximation approach to distributed optimization
  over directed graphs}.
\newblock {\em Automatica}, 112:108691, 2020.

\bibitem{Scheinker20183}
A.~Scheinker.
\newblock {Extremum Seeking for Force and Torque Actuated Systems}.
\newblock In {\em {Proceedings of the 57th IEEE Conference on Decision and
  Control}}, pages 7107--7111, 2018.

\bibitem{Scheinker20132}
A.~Scheinker and M.~Krsti{\'c}.
\newblock {Non-$C^2$ Lie Bracket Averaging for Nonsmooth Extremum Seekers}.
\newblock {\em Journal of Dynamic Systems, Measurements, and Control},
  136:011010, 2014.

\bibitem{Solis2019}
C.~Solis, J.~Clempner, and A.~Poznyak.
\newblock {Robust Extremum Seeking for a Second Order Uncertain Plant Using a
  Sliding Mode Controller}.
\newblock {\em International Journal of Applied Mathematics and Computer
  Science}, 29(4):703--712, 2019.

\bibitem{Sontag1996}
E.~D. Sontag and Y.~Wang.
\newblock {New characterizations of input-to-state stability}.
\newblock {\em IEEE Transactions on Automatic Control}, 41(9):1283--1294, 1996.

\bibitem{Stankovic2009}
M.~S. Stankovi{\'c} and D.~M. Stipanovi{\'c}.
\newblock {Discrete time extremum seeking by autonomous vehicles in a
  stochastic environment}.
\newblock In {\em {Proceedings of the 48th IEEE Conference on Decision and
  Control}}, pages 4541--4546, 2009.

\bibitem{Suttner20192}
R.~Suttner.
\newblock {Extremum seeking control for an acceleration controlled unicycle}.
\newblock In {\em {Proceedings of the 11th IFAC Symposium on Nonlinear Control
  Systems}}, pages 676--681, 2019.

\bibitem{Suttner20202}
R.~Suttner and M.~Krsti{\'c}.
\newblock {Acceleration-actuated source seeking without position and velocity
  sensing}.
\newblock In {\em {Proceedings of the 20th IFAC World Congress}}, pages
  5348--5355, 2020.

\bibitem{Suttner20193}
R.~Suttner and Z.~Sun.
\newblock {Exponential and practical exponential stability of second-order
  formation control systems}.
\newblock In {\em {Proceedings of the 58th IEEE Conference on Decision and
  Control}}, pages 3521--3526, 2019.

\bibitem{Tan2006}
Y.~Tan, D.~Ne{\v{s}}i{\'{c}}, and I.~Mareels.
\newblock {On non-local stability properties of extremum seeking control}.
\newblock {\em Automatica}, 42(6):889--903, 2006.

\bibitem{Walsh2017}
A.~Walsh, J.~R. Forbes, S.~A. Chee, and J.~J. Ryan.
\newblock {Kalman-Filter-Based Unconstrained and Constrained Extremum-Seeking
  Guidance on SO(3)}.
\newblock {\em Journal of Guidance, Control, and Dynamics}, 40(9):2260--2271,
  2017.

\bibitem{ZhangBook}
C.~Zhang and R.~Ord{\'{o}}{\~{n}}ez.
\newblock {\em {Extremum-Seeking Control and Applications}}.
\newblock Advances in Industrial Control. Springer, London, 2012.

\bibitem{Zhang20072}
C.~Zhang, A.~Siranosian, and M.~Krsti{\'c}.
\newblock {Extremum seeking for moderately unstable systems and for autonomous
  vehicle target tracking without position measurements}.
\newblock {\em Automatica}, 43(10):1832--1839, 2007.

\end{thebibliography}

\end{document}